\documentclass{article}
\usepackage{macros}
\usepackage{xurl}

\bibliography{references}

\title{Completely parabolic surfaces are Veech}
\author{Inbo Gottlieb Fenves}
\date{\today}

\begin{document}

\maketitle

\begin{abstract}
    \noindent We prove that completely parabolic translation surfaces are Veech, answering a question of Smillie-Weiss \cite{SmiWei2}.
\end{abstract}

A translation surface is \textit{Veech} if its stabilizer under the $\SL_2(\R)$-action is a lattice. Veech surfaces exhibit remarkable dynamical properties, and in particular obey the \textit{Veech dichotomy}  \cite{Vee}: straight line flow in any direction is either uniquely ergodic or decomposes into finitely many cylinders with commensurable moduli. There has been considerable interest in giving alternate characterizations of Veech surfaces: see e.g. Vorobets \cite{Vor}, Smillie-Weiss \cite{SmiWei}, Chaika-Lindsey \cite{ChaLin}, and Forester-Tang-Tao \cite{ForTanTao}.

A \textit{periodic direction} on a translation surface is a direction in which the straight line flow fully decomposes into cylinders; it is \textit{parabolic} if the cylinders have pairwise commensurable moduli. A surface is \textit{completely periodic} if every direction containing a cylinder is periodic, and \textit{completely parabolic} if every such direction is parabolic. By Lanneau-Nguyen-Wright \cite{LanNguWri}, all completely parabolic surfaces with rank-1 orbit closures are Veech. Without the parabolicity condition, the Veech dichotomy does not characterize Veech surfaces: see Smillie-Weiss \cite{SmiWei1} and Lanneau-Nguyen \cite{LanNgu} for counterexamples. Our main theorem answers a question of Smillie-Weiss (\cite[Question 5]{SmiWei2}, \cite[Problem 10.2]{Wri}), establishing the ``converse Veech dichotomy.''

\begin{thm}\label{thm:mainthm}
    A translation surface is Veech if and only if it is completely parabolic.
\end{thm}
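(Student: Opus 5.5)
The forward implication is Veech's theorem \cite{Vee}. If the stabilizer $\SL(X)$ is a lattice, then every cylinder direction is fixed by a parabolic element of $\SL(X)$. That parabolic acts as a power of a Dehn multitwist on each cylinder in the direction, so the direction is periodic and the moduli are commensurable. All the work is in the converse: showing that a completely parabolic surface $X$ is Veech.

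For the converse I would reduce to the rank-one case, which Lanneau-Nguyen-Wright \cite{LanNguWri} already handle. So the goal is to show that the orbit closure $\mathcal{M}=\overline{\GL_2(\R)X}$ has rank one.

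The first step is to produce many parabolics. In each cylinder direction $\theta$, commensurability of the moduli means some power of the horocycle flow fixing $\theta$ acts as a multitwist. Hence $\SL(X)$ contains a parabolic $P_\theta$ fixing $\theta$. Cylinder directions are dense (Masur), so $\SL(X)$ contains parabolics with a dense set of fixed directions. In particular it is non-elementary, and its limit set is all of $\R\mathbb{P}^1$.

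To get rank one, suppose $\mathcal{M}$ has rank at least two. By the cylinder deformation theorem of Wright, in any cylinder direction $\theta$ one can independently twist and stretch the $\mathcal{M}$-equivalence classes of cylinders, and at least two such classes are not forced to be parallel. I would look for a direction $\theta$ on $X$ itself in which the moduli of two non-equivalent cylinder classes have an irrational ratio. That would directly contradict complete parabolicity.

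The plan for finding such a direction is as follows:
\begin{enumerate}
\item Use the parabolics $P_\theta$ and the density of their fixed directions.
\item Apply $P_{\theta'}^n$ for a second cylinder direction $\theta'$ to the cylinders in direction $\theta$. This produces new cylinder directions on $X$ whose circumference and height data are explicit linear combinations of the periods.
\item Compute the moduli ratios in these new directions via the action on relative homology.
\item Argue that if every such ratio is rational, then the tangent space $T\mathcal{M}$, projected to absolute cohomology, is spanned by the real and imaginary parts of $\omega$ alone. That is exactly rank one.
\end{enumerate}
In effect I would show that the rational ratios force $\SL(X)$, through its parabolic elements, to generate the full rank-one "twist space", in the spirit of Smillie-Weiss's no-small-triangles criterion \cite{SmiWei}. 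An alternative to the last step is to use the parabolics to show that saddle connection holonomies form a discrete set with uniformly bounded triangle areas, and then invoke Smillie-Weiss directly to conclude that $X$ is Veech, bypassing \cite{LanNguWri}.

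I expect the main obstacle to be the last step, turning "all moduli ratios rational in every cylinder direction" into a rank statement or an area bound. Rationality is an arithmetic condition, while the rank is a dimension count. I would first try a measure-rigidity argument: in rank at least two, the set of $Y\in\mathcal{M}$ with a cylinder direction of irrational moduli ratio is dense and $\GL_2(\R)$-invariant in a controllable way. Then I would use the parabolic-rich stabilizer to show that $X$ cannot avoid this set. If that fails, the fallback is to exploit the countability of $\SL(X)$ against a transversality argument for the moduli ratios along the deformations provided by the cylinder deformation theorem.
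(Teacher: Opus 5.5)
Your forward direction and the reduction to rank one via \cite{LanNguWri} are fine, but the converse is not proved. Everything rests on your step 4 (rational moduli ratios in every direction $\Rightarrow$ rank one), which you leave open, and the mechanisms you propose for it cannot work. Steps 2--3 yield no new data. If $P\in\mathrm{SL}(X)$ is the derivative of an affine automorphism $\phi$, then $\phi$ carries the whole cylinder decomposition in direction $\theta$ onto the one in direction $P\theta$. It preserves areas and multiplies every circumference by the same factor $|Pu_\theta|$, where $u_\theta$ is the unit vector in direction $\theta$. So every modulus is multiplied by $|Pu_\theta|^{-2}$, and all ratios $\mu_i/\mu_j$ are unchanged, hence rational by hypothesis. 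Directions outside $\mathrm{SL}(X)$-orbits of known ones are not controlled by your plan at all. The measure-rigidity fallback fails structurally: irrationality of a ratio $\mu_i/\mu_j$ is destroyed by arbitrarily small deformations (e.g.\ stretching one $\mathcal{M}$-equivalence class). It is not an open condition, so density of $\GL_2^+(\R)X$ in $\mathcal{M}$ gives no reason for $X$ to have a property that is merely dense in $\mathcal{M}$. Likewise, the Smillie--Weiss criterion needs a positive \emph{lower} bound on virtual triangle areas, and nothing in your plan supplies one.

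What is missing is an arithmetic argument, paired with a non-Veech criterion that \emph{is} open and therefore transfers from $\mathcal{M}$ to $X$. Two transverse parabolics give a hyperbolic element. By McMullen \cite{McM1} one can then normalize $X$ by $\GL_2^+(\R)$ to have area $1$ and relative periods in $N^{-1}(\mathcal{O}_k+i\mathcal{O}_k)$, where the trace field $k$ is totally real by Hubert--Lanneau \cite{HubLan}. Take a cylinder direction with $\mu_j=q_{ij}\mu_i$, $q_{ij}\in\Q_+$. Unit area gives $\mu_i^{-1}=\sum_j q_{ij}c_j^2$, which is totally positive. Hence each cylinder area $c_ih_i=\mu_ic_i^2\in N^{-2}\mathcal{O}_k$ is totally positive; this is exactly where commensurability enters. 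Since the areas sum to $1$ under every real embedding, all conjugates of $c_ih_i$ lie in $(0,1]$, so $X$ has only finitely many cylinder areas. Now suppose $X$ were not Veech. Wright's cylinder deformations \cite{Wri} would give unit-area surfaces in $\mathcal{M}$ with arbitrarily small cylinders: shrink one equivalence class when $\mathrm{rank}\ge 2$, or use a rel deformation degenerating a cylinder when the rank is $1$ and rel $\ge 1$. Having a cylinder of area $<\epsilon$ is an open condition, so density of the orbit puts such cylinders on $X$ itself, a contradiction. This bypasses both the rank-one reduction and \cite{LanNguWri}.
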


The implication that Veech implies complete parabolicity was established in \cite{Vee}, so it suffices to show that any completely parabolic surface is Veech. We begin with the following lemma, which uses extensively results of Wright \cite{Wri} on cylinder deformations.

\begin{lemma}\label{lemma:finiteareas}
    A translation surface is Veech if and only if its set of cylinder areas is finite.
\end{lemma}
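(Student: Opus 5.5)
Veech implies finitely many cylinder areas is classical. By Veech \cite{Vee}, a Veech surface has only finitely many periodic directions modulo its Veech group, i.e.\ finitely many cusps. Each periodic direction has finitely many cylinders. Elements of the Veech group are area-preserving affine maps and send cylinders to cylinders of the same area. Hence the set of cylinder areas is a finite union of finite sets.

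For the converse, suppose the set of cylinder areas of $(X,\omega)$ is finite. The plan is to show that the orbit closure $\mathcal M = \overline{\GL_2(\R)(X,\omega)}$ is a closed orbit, since a surface with closed $\SL_2(\R)$-orbit is Veech by Smillie's theorem. Equivalently, we must show that $\mathcal M$ has dimension $2$, i.e.\ rank one with no rel. We argue by contradiction, using Wright's cylinder deformation theorem \cite{Wri}. Choose a direction containing a cylinder; after rotating, take it horizontal. The horizontal cylinders split into $\mathcal M$-parallel equivalence classes. For any equivalence class $\mathcal C$, the standard cylinder shear $u_t^{\mathcal C}$ and stretch $a_t^{\mathcal C}$ keep the surface in $\mathcal M$. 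If $\mathcal M$ is not a single closed orbit, then by Wright \cite{Wri} one of the following holds.
\begin{itemize}
\item[(a)] Some direction has at least two equivalence classes.
\item[(b)] Some equivalence class has cylinders whose moduli ratios can be varied inside $\mathcal M$. This is the rel case, handled by the twist space.
\end{itemize}
In either case we obtain a deformation of $(X,\omega)$ inside $\mathcal M$ that changes some cylinder areas while fixing others.

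Next we transport the finiteness hypothesis to nearby surfaces. Cylinder areas are not continuous in $\mathcal M$, but they are lower semicontinuous in the following sense. If $(X',\omega')$ is near $(X,\omega)$ and has a cylinder $C'$ whose core curve has bounded length, then $C'$ persists on $(X,\omega)$ with nearly equal area. The more useful consequence runs the other way. Since $\mathcal M$ is the orbit closure, every surface in $\mathcal M$ is a limit of $g_n(X,\omega)$ with $g_n \in \GL_2(\R)$. Scaling by $\GL_2(\R)$ multiplies all areas by $\det g$, so we normalize to $\SL_2(\R)$, where areas are preserved. Any cylinder on a surface in $\mathcal M$ is then approximated by cylinders of $g_n(X,\omega)$, which are images of cylinders of $(X,\omega)$. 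Hence every surface in $\mathcal M$ of the same area has all its cylinder areas in the closure of the original finite set $A$, which is $A$ itself.

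Finally we derive the contradiction. Apply $a_t^{\mathcal C}$ to one equivalence class $\mathcal C$ while leaving the rest fixed, normalizing total area by a global rescaling. The area of a cylinder in $\mathcal C$ then varies continuously and non-constantly in $t$, so it takes uncountably many values. This contradicts the previous step. In case (b), we instead use the twist-space deformation changing heights within a class, which again changes an individual cylinder's area continuously.

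If neither (a) nor (b) occurs in any direction, every periodic direction is $\mathcal M$-parabolic with a single class and no rel. Wright's results then give that $\mathcal M$ has rank one with no rel, so $\mathcal M$ is a closed orbit. One subtlety remains: a direction with a cylinder might not be periodic. Wright's theorem that cylinders in $\mathcal M$-parallel classes fill the surface in rank one handles this, as does the fact that we only need one periodic direction to run the argument.

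The main obstacle is the semicontinuity step. We need cylinders on limit surfaces to come from cylinders on the $g_n(X,\omega)$ with nearly equal areas, so that areas stay in $A$. Cylinders persist under small perturbation, which gives this direction. Even so, one must check that the deformed cylinder is genuinely approximated by cylinders of the orbit, rather than degenerating.
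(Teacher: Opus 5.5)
Your proposal is correct and follows essentially the paper's route: you transport cylinder areas from $\mathcal{M}$ back to $(X,\omega)$ by persistence of cylinders along approximating $\SL_2(\R)$-translates, then in rank at least two stretch one of two $\mathcal{M}$-equivalence classes (the paper cites \cite[Corollary 9.1]{Wri} for their existence), and in rank one with rel use an imaginary rel twist fixing all horizontal saddle connections (\cite[Theorem 1.5, Lemma 8.6]{Wri}, which is the precise input your ``twist space'' remark needs). The only real difference is that you derive the contradiction from a continuum of area values inside the closed finite set $A$, whereas the paper pushes the same deformations toward degeneration to get arbitrarily small areas; your opening ``lower semicontinuity'' claim (that a cylinder on a nearby surface persists on $(X,\omega)$) is false as stated, but you only use the correct direction---a cylinder on the limit surface persists, with nearly equal area, on the approximants $g_n(X,\omega)$, since being a cylinder is an open condition---so your closing worry about degeneration is unfounded.
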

\begin{proof}
    If a translation surface $x$ is Veech, then finiteness of cylinder areas follows from \cite{Vee}. Now, suppose $x$ is not Veech, and let $\Mc = \overline{\GL_2^+(\R)x}$ be its orbit-closure; by assumption, $\Mc$ is not a Veech curve. We claim that it suffices to show that there exist surfaces in $\Mc^1$ with cylinders of arbitrarily small area. Indeed, if this holds, then for each $\epsilon > 0$, we may choose a surface $y \in \Mc^1$ with a cylinder $C$ of area at most $\epsilon$; since $x$ has dense $\SL_2(\R)$-orbit in $\Mc^1$ up to rescaling, we can find some $g \in \SL_2(\R)$ with $gx$ arbitrarily close to $y$. Since cylinders persist in open sets, we can find a cylinder $C'$ on $gx$ of area at most $2\epsilon$, and since $g$ preserves area this cylinder is also area at most $2\epsilon$ on $x$; thus $x$ has arbitrarily low cylinder areas, and in particular this set is infinite.
    
    Suppose first $\rk \Mc \ge 2$, and choose a surface $x_0 \in \Mc$ with two disjoint $\Mc$-equivalence classes of horizontal cylinders $\Cc,\Dc$ by \cite[Corollary 9.1]{Wri}. By \cite[Proof of Theorem 1.1]{Wri}, the flow $a_{\Cc}^t$ which stretches the verticals of $\Cc$ by a factor of $e^{t}$ and acts trivially on the rest of the surface lies in $T_{x_0}\Mc$, and so by choosing $t \gg 1$ we can make the $\Cc$-cylinders on $x_t = a_{\Cc}^{-t}x_0$ have arbitrarily small area. We may then apply $a_{\Dc}^s$-flow until $x_t' = a_\Dc^sx_t \in \Mc^1$ has unit-area, and has arbitrarily low-area cylinders.

    Now, suppose $\rk \Mc = 1$, and $\rel \Mc \ge 1$. Choose a surface $x_0 \in \Mc^1$ with the maximal number of horizontal cylinders; note that the rank-1 assumption implies $x_0$ is horizontally periodic by \cite[Theorem 1.5]{Wri}. By \cite[Lemma 8.6]{Wri}, we can find a nonzero real rel vector $v \in T_{x_0}\Mc \cap \ker p \cap H^1(X,\Sigma;\R)$ which acts trivially on every horizontal saddle connection; then $x_t = x_0 + itv$ remains in $\Mc^1$ so long as it is well-defined. If $t_* > 0$ is the first value for which a cylinder degenerates, then for $0 < t < t_*$, at least one of the cylinders on $x_t$ will have area of order $t_* - t$, which can be arbitrarily small.
\end{proof}

\begin{lemma}\label{lemma:unitareadeduction}
    Suppose $x$ is completely parabolic. Then there exists some $g \in \GL_2^+(\R)$, some totally real number field $\kay$, and some integer $N$ so that $y=gx$ has unit area, and moreover $\Lambda_y^{\rel} = \hol_y(H_1(X,\Sigma;\Z))$ lies in $N^{-1}(\Oc_\kay + i\Oc_\kay)$.
\end{lemma}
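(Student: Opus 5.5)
We plan to normalize $x$ using two transverse parabolic directions, and then read off the arithmetic of all relative periods from the cylinder decompositions in those directions. Since $x$ has a cylinder, it has a periodic direction, and by complete parabolicity that direction is parabolic. After rotating, we may assume the horizontal direction is parabolic. Its cylinders $C_1,\dots,C_m$ then have moduli $\mu_j = h_j/w_j$ that are rational multiples of one another. The horizontal multitwist $u_t=\begin{pmatrix}1&t\\0&1\end{pmatrix}$ with $t$ a common multiple of the $1/\mu_j$ lies in the Veech group, so the Veech group contains a parabolic element.

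The next step is to produce a second parabolic direction, transverse to the first. Take any cylinder not parallel to the horizontal, for instance the image of a vertical-ish saddle connection chain, or any cylinder found in some direction by Masur's theorem. Its direction is periodic and therefore parabolic. Applying an element of $\GL_2^+(\R)$, we may send this direction to the vertical while keeping the horizontal fixed. Rescaling the diagonal, we may further assume that the horizontal twist is $u_1$ and the vertical twist is $\begin{pmatrix}1&0\\ s&1\end{pmatrix}$ for some $s$. The Veech group then contains two noncommuting parabolics. The trace field of the Veech group, generated by traces such as $2+s$, is then a number field $\kay$ of degree at most the genus. It is totally real, by Kenyon--Smillie/McMullen (the trace field of any hyperbolic element is totally real), and it coincides with the holonomy field.

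We then handle integrality. Relative to the horizontal and vertical cylinder decompositions, the horizontal widths $w_j$, the heights $h_j$, and the twist parameters all lie in $\kay$ after a final real rescaling. The key observation is that a relative homology basis can be chosen among horizontal and vertical saddle connections, together with cross curves of the cylinders. Parabolicity forces $h_j/w_j \in \Q\cdot \mu$, which gives the relations among these quantities. Next we use $\kay$-coordinates with bounded denominators: all periods are $\Z$-combinations of finitely many elements of $\kay$ (respectively $i\kay$). Clearing denominators, any finitely generated $\Z$-submodule of $\kay$ sits inside $N^{-1}\Oc_\kay$ for some integer $N$. Since $H_1(X,\Sigma;\Z)$ is finitely generated, it therefore suffices to show that $\hol(\gamma)\in \kay+i\kay$ for each basis element. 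Finally, rescaling by a real diagonal matrix, and using the freedom in the choice of scale of the vertical direction, we arrange unit area. This changes the real and imaginary parts by reals $\lambda,\lambda^{-1}$, and we must keep these inside $\kay$. We do this by first normalizing so that $\mathrm{area}\in\kay$ (area is $\sum w_jh_j \in \kay$) and then scaling only the imaginary part by $1/\mathrm{area}\in\kay$.

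The main obstacle is showing that all real parts (horizontal holonomies) lie in $\kay$, and not merely the horizontal cylinder widths. The absolute periods follow from the Gutkin--Judge/McMullen statement that, for a surface with two transverse parabolic directions normalized as above, absolute periods lie in $\kay(i)$. Relative periods, however, involve saddle connections joining distinct singularities. For these we would use that every horizontal saddle connection is a boundary segment of a horizontal cylinder. Its length is then determined by the combinatorics: it is a linear combination, with integer coefficients, of the circumferences and of the positions of the singularities. The positions are in turn pinned down by the vertical decomposition, whose widths lie in $i\kay$ by the symmetric argument. So the first thing to try is to write each horizontal saddle connection length as an intersection of horizontal and vertical cylinder data, and to check that this solves a linear system over $\kay$.
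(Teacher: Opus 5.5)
There is a genuine gap, and it sits exactly where the content of the lemma lies. The statement is about $\Lambda^{\rel}$, but you never show that relative periods lie in $\kay+i\kay$. You label this ``the main obstacle'' and stop at what you would ``try first''. The earlier claim that the $w_j$, $h_j$ and twist parameters lie in $\kay$ after a real rescaling is also only asserted, with the absolute periods delegated to a citation. If you are willing to cite McMullen, cite the full statement the paper uses (Theorems 9.4--9.5 and Corollary 9.6 of \cite{McM1}). It says that once the Veech group contains a hyperbolic element, some $h\in\GL_2^+(\R)$ puts all of $\Lambda^{\rel}_{hx}$ into $\kay+i\kay$, and your obstacle disappears. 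Separately, your reason for total reality is false: the trace field of a hyperbolic affine element need not be totally real. The Arnoux--Yoccoz pseudo-Anosov is a counterexample, which is why Veech groups without parabolics exist \cite{HubLan}. What you need is the Hubert--Lanneau theorem that two transverse parabolics force a totally real trace field. You do have two, so this repairs the citation rather than the argument.

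Your two-decomposition setup can also be finished by hand, giving a more self-contained proof than the paper's. Let $B_k$ be the vertical cylinders, with horizontal widths $v_k$ and vertical circumferences $\ell_k$, and let $E_{jk}\ge 0$ be the number of components of $C_j\cap B_k$. Each component is a $v_k\times h_j$ rectangle, and these rectangles tile $X$. Every point of $\Sigma$ is a corner, and every edge has holonomy $\pm v_k$ or $\pm i h_j$. The holonomy of a class in $H_1(X,\Sigma;\Z)$ equals that of its image in $H_1(X,V;\Z)$, where $V$ is the vertex set, and the latter group is generated by edges. Hence $\Lambda^{\rel}\subseteq \Z\langle v_k\rangle + i\,\Z\langle h_j\rangle$, which is the missing statement about saddle connections joining distinct zeros. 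For the field, your normalization gives $h_j=n_jw_j$ and $sv_k=m_k\ell_k$ with positive integers $n_j,m_k$ (taking $s>0$), while $w=Ev$ and $\ell=E^{T}h$. So $h$ is a positive eigenvector, with eigenvalue $s$, of the nonnegative integer matrix $PEME^{T}$, where $P=\diag(n_j)$ and $M=\diag(m_k)$. This matrix is irreducible because $X$ is connected, so by Perron--Frobenius $s$ is simple. After a real scalar rescaling (which does not change the Veech group) all $h_j\in\Q(s)$, whence all $v_k$, the entries of $s^{-1}ME^{T}h$, lie in $\Q(s)$ as well. As $PEME^{T}$ is conjugate to $(P^{1/2}EM^{1/2})(P^{1/2}EM^{1/2})^{T}$, every conjugate of $s$ is real, so $\kay=\Q(s)$ is totally real without appeal to \cite{HubLan}. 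Your area normalization then finishes as in the paper; the paper rescales the real rather than the imaginary part, and either works.
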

\begin{proof}
    Since $x$ is completely parabolic, its Veech group $\Gamma = \Stab_{\SL_2(\R)}(x)$ contains two independent parabolics, and thus also contains a hyperbolic element. By McMullen \cite[Theorems 9.4-9.5, Corollary 9.6]{McM1}, we may choose some $h \in\GL_2^+(\R)$ so that the relative periods $\Lambda_{hx}^{\rel}$ lie in $\kay + i\kay$, where $\kay = \Q(\tr \Gamma)$ is the \textit{trace field} of $x$; by Hubert-Lanneau \cite[Theorem 1.1]{HubLan}, $\kay$ is totally real. Since area can be written as a quadratic form in the absolute periods of $hx$ with $\Z$-coefficients, it follows that $\Area(hx) \in \kay$, and so applying the diagonal matrix $h' = \diag(\Area(hx)^{-1},1) \in \GL_2^+(\kay)$, we see that $y = h'hx$ has unit-area and relative periods in $\kay + i\kay$. Since $\Lambda_y^{\rel}$ is finitely generated, it follows that actually $\Lambda_y^{\rel} \subseteq N^{-1}(\Oc_\kay + i\Oc_\kay)$ for some $N$.
\end{proof}

We will also use the following number-theoretic lemma.

\begin{lemma}\label{lemma:numbertheory}
    Suppose $\kay$ is a totally real number field and $R  > 0$. Then the set of $\alpha \in \Oc_\kay$ with $|\sigma(\alpha)| \le R$ for each embedding $\sigma \colon \kay \hookrightarrow \R$ is a finite set.
\end{lemma}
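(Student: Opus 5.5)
The plan is to use the standard fact that the ring of integers of a number field embeds as a lattice in Euclidean space. Let $d = [\kay:\Q]$ and let $\sigma_1,\dots,\sigma_d$ be the real embeddings of $\kay$; since $\kay$ is totally real, these are all of the embeddings. Consider the Minkowski map
\[
\iota \colon \kay \to \R^d, \qquad \alpha \mapsto (\sigma_1(\alpha),\dots,\sigma_d(\alpha)).
\]
The set described in the statement is exactly $\iota^{-1}$ of the cube $[-R,R]^d$ intersected with $\Oc_\kay$. So it suffices to know two things: that $\iota$ is injective, which is clear since each $\sigma_i$ is a field embedding, and that $\iota(\Oc_\kay)$ is a discrete subgroup of $\R^d$. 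A bounded subset of a discrete subgroup is finite, and this gives the result.

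For discreteness, I would take a $\Z$-basis $\omega_1,\dots,\omega_d$ of $\Oc_\kay$. The vectors $\iota(\omega_j)$ are linearly independent over $\R$, because $\det(\sigma_i(\omega_j))^2$ is the discriminant of the basis, which is a nonzero integer. Hence $\iota(\Oc_\kay)$ is a full-rank lattice in $\R^d$, and only finitely many lattice points lie in any compact set.

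Alternatively, I could avoid lattices altogether. If $\alpha$ lies in the set, then the characteristic polynomial $\prod_i (T - \sigma_i(\alpha))$ has integer coefficients because $\alpha$ is an algebraic integer. Each coefficient is an elementary symmetric function of the $\sigma_i(\alpha)$, so it is bounded in absolute value by $\binom{d}{k}R^k$. There are therefore only finitely many possible polynomials, each with at most $d$ roots, and $\alpha$ is among these roots. This second argument is fully elementary, and it is the one I would write out.

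Neither argument has a real obstacle. The only point needing care is that the bound must hold for every embedding, not just one; this is exactly what the hypothesis provides and what makes the coefficient bounds work.
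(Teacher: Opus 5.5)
Your proposal is correct, and your first argument is exactly the paper's proof: the paper just cites discreteness of $\Oc_\kay$ under the Minkowski embedding, and you additionally justify that discreteness via the nonvanishing discriminant of an integral basis. The second argument, the one you say you would write out, takes a genuinely different route. Instead of geometry of numbers, it uses that $\prod_i (T-\sigma_i(\alpha))$ is a power of the minimal polynomial of $\alpha$, so it lies in $\Z[T]$ because $\alpha$ is an algebraic integer. The hypothesis then bounds the coefficients, which leaves finitely many candidate polynomials. Each candidate has at most $d$ roots in the field $\kay$, and $\alpha$ is a root of its own, so the map $\alpha \mapsto \prod_i (T-\sigma_i(\alpha))$ is finite-to-one. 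This route does not need the fact that $\Oc_\kay$ is a free $\Z$-module of rank $d$, whose standard proof itself goes through the discriminant. It also gives an explicit bound, in terms of $d$ and $R$, on the number of such $\alpha$. The lattice argument is shorter once standard algebraic number theory is assumed, and it is the more conceptual statement the paper chose to cite.
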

\begin{proof}
    This follows from the fact that $\Oc_\kay$ is discrete in the \textit{Minkowski embedding} $\kay \hookrightarrow \R^d$.
\end{proof}

We are now in place to prove Theorem \ref{thm:mainthm}.

\begin{proof}[Proof of Theorem \ref{thm:mainthm}]
    Note that $x$ is Veech if and only if $gx$ is Veech for each $g \in \GL_2^+(\R)$, so we may replace $x$ with $y$ as in Lemma \ref{lemma:unitareadeduction}. Now, choose a cylinder direction on $y$, whose cylinders $C_i$ have circumferences $c_i$, heights $h_i$, and moduli $\mu_i = h_i/c_i$. By parabolicity, there exist $q_{ij} \in \Q_+$ for which $\mu_j = q_{ij}\mu_i$. We have
    \begin{equation*}
        1 = \sum_j c_jh_j = \sum_j \mu_jc_j^2 = \mu_i \sum_j q_{ij}c_j^2.
    \end{equation*}
    Note that $c_j^2$ is a sum of squares in $\kay$, hence \textit{totally positive}.\footnote{An element $\alpha$ of a totally real number field $\kay$ is \textit{totally positive} if $\sigma(\alpha) > 0$ for every embedding $\sigma \colon \kay \hookrightarrow \R$.} Thus $\mu_i^{-1} = \sum_j q_{ij}c_j^2$ and $c_ih_i = \mu_ic_i^2$ are also totally positive. Since $\sum_j \sigma(c_jh_j) = 1$ for each embedding $\sigma \colon \kay \hookrightarrow \R$, it follows that $0 < \sigma(c_ih_i) \le 1$. But $c_ih_i = \Area(C_i)$ is also expressible as the determinant of some pair in $\Lambda_y^{\rel}$, and so $c_ih_i \in N^{-2}\Oc_\kay$. Applying Lemma \ref{lemma:numbertheory} with $R = N^2$ shows that the set of cylinder areas is finite, so we are done by Lemma \ref{lemma:finiteareas}.
\end{proof}

\textbf{Acknowledgments:} Research partially funded by NSF RTG grant No. DMS-2230900. I would like to thank my advisor Paul Apisa for suggesting the proof of Lemma \ref{lemma:finiteareas} in the rank-1 case. I would also like to thank Curtis McMullen and Barak Weiss for helpful comments on an early draft, substantially improving the exposition.

\textbf{AI Disclosure:} The first proof of Theorem \ref{thm:mainthm} was found by ChatGPT. The author subsequently produced alternate proofs of each of the intermediate results. All writing is human-generated.

\printbibliography

@article{LanNguWri, title={Finiteness of Teichmüller Curves in Non-Arithmetic Rank 1 Orbit Closures}, volume={139}, ISSN={1080-6377}, url={https://muse.jhu.edu/article/677444}, DOI={10.1353/ajm.2017.0036}, number={6}, journal={American Journal of Mathematics}, author={Lanneau, Erwan and Nguyen, Duc-Manh and Wright, Alex}, year={2017}, pages={1449–1463}, language={en} }

@article{HubLan, title={Veech groups without parabolic elements}, volume={133}, ISSN={0012-7094, 1547-7398}, url={https://projecteuclid.org/journals/duke-mathematical-journal/volume-133/issue-2/Veech-groups-without-parabolic-elements/10.1215/S0012-7094-06-13326-4.full}, DOI={10.1215/S0012-7094-06-13326-4}, abstractNote={We prove that a translation surface that has two transverse parabolic elements has a totally real trace field. As a corollary, nontrivial Veech groups that have no parabolic elements do exist. The proof follows Veech’s viewpoint on Thurston’s construction of pseudo-Anosov diffeomorphisms}, number={2}, journal={Duke Mathematical Journal}, author={Hubert, Pascal and Lanneau, Erwan}, year={2006}, month=june, pages={335–346} }

@article{SmiWei1, title={Veech’s dichotomy and the lattice property}, volume={28}, rights={https://www.cambridge.org/core/terms}, ISSN={0143-3857, 1469-4417}, url={https://www.cambridge.org/core/product/identifier/S0143385708000114/type/journal_article}, DOI={10.1017/S0143385708000114}, number={6}, journal={Ergodic Theory and Dynamical Systems}, author={Smillie, John and Weiss, Barak}, year={2008}, month=dec, pages={1959–1972}, language={en} }

@incollection{SmiWei2,
  author    = {Smillie, John and Weiss, Barak},
  title     = {Finiteness results for flat surfaces: a survey and problem list},
  booktitle = {Partially Hyperbolic Dynamics, Laminations, and {Teichm{\"u}ller} Flow},
  editor    = {Forni, Giovanni and Lyubich, Mikhail and Pugh, Charles and Shub, Michael},
  series    = {Fields Institute Communications},
  volume    = {51},
  pages     = {125--137},
  publisher = {American Mathematical Society},
  address   = {Providence, RI},
  year      = {2007},
  doi       = {10.1090/fic/051/06}
}

@article{Vor, title={Planar structures and billiards in rational polygons: the Veech alternative}, volume={51}, ISSN={0036-0279, 1468-4829}, url={https://www.mathnet.ru/eng/rm1012}, DOI={10.1070/RM1996v051n05ABEH002993}, number={5}, journal={Russian Mathematical Surveys}, author={Vorobets, Ya B}, year={1996}, month=oct, pages={779–817} }

@article{SmiWei, title={Characterizations of lattice surfaces}, volume={180}, rights={http://www.springer.com/tdm}, ISSN={0020-9910, 1432-1297}, url={http://link.springer.com/10.1007/s00222-010-0236-0}, DOI={10.1007/s00222-010-0236-0}, number={3}, journal={Inventiones mathematicae}, author={Smillie, John and Weiss, Barak}, year={2010}, month=june, pages={535–557}, language={en} }

@article{ChaLin, title={Horocycle flow orbits and lattice surface characterizations}, volume={39}, rights={https://www.cambridge.org/core/terms}, ISSN={0143-3857, 1469-4417}, url={https://www.cambridge.org/core/product/identifier/S0143385717000827/type/journal_article}, DOI={10.1017/etds.2017.82}, number={06}, journal={Ergodic Theory and Dynamical Systems}, author={Chaika, Jon and Lindsey, Kathryn}, year={2019}, month=june, pages={1441–1461}, language={en} }

@article{LanNgu, title={Complete periodicity of Prym eigenforms}, volume={49}, ISSN={0012-9593, 1873-2151}, url={http://smf4.emath.fr/Publications/AnnalesENS/4_49/html/ens_ann-sc_49_87-130.php}, DOI={10.24033/asens.2277}, number={1}, journal={Annales scientifiques de l’École normale supérieure}, author={Lanneau, Erwan and Nguyen, Duc-Manh}, year={2016}, pages={87–130} }

@article{Vee, title={Teichm\"uller curves in moduli space, Eisenstein series and an application to triangular billiards}, volume={97}, rights={http://www.springer.com/tdm}, ISSN={0020-9910, 1432-1297}, url={http://link.springer.com/10.1007/BF01388890}, DOI={10.1007/BF01388890}, number={3}, journal={Inventiones Mathematicae}, author={Veech, W. A.}, year={1989}, month=oct, pages={553–583}, language={de} }

@article{Wri, title={Cylinder deformations in orbit closures of translation surfaces}, volume={19}, ISSN={1364-0380, 1465-3060}, url={http://msp.org/gt/2015/19-1/p09.xhtml}, DOI={10.2140/gt.2015.19.413}, number={1}, journal={Geometry and Topology}, author={Wright, Alex}, year={2015}, month=feb, pages={413–438}, language={en} }

@article{McM1, title={Teichmüller geodesics of infinite complexity}, volume={191}, ISSN={0001-5962}, url={http://projecteuclid.org/euclid.acta/1485891606}, DOI={10.1007/BF02392964}, number={2}, journal={Acta Mathematica}, author={McMullen, Curtis T.}, year={2003}, pages={191–223}, language={en} }

@article{ForTanTao, title={Veech surfaces and simple closed curves}, volume={223}, ISSN={0021-2172, 1565-8511}, url={http://link.springer.com/10.1007/s11856-017-1617-5}, DOI={10.1007/s11856-017-1617-5}, number={1}, journal={Israel Journal of Mathematics}, author={Forester, Max and Tang, Robert and Tao, Jing}, year={2018}, month=feb, pages={323–342}, language={en} }
\end{document}